\documentclass[10pt,a4paper,notitlepage, oneside]{article}
\usepackage[utf8x]{inputenc} 
\usepackage{ucs} 
\usepackage[english]{babel}
\usepackage{amsmath}
\usepackage{amsthm}
\usepackage{amsfonts}
\usepackage{amssymb}
\usepackage{mathtools}
\usepackage{cite}
\usepackage{hyperref}

\usepackage{enumerate} 

\usepackage{tikz}
\usetikzlibrary{backgrounds}

\usepackage{todonotes}  

\newtheorem{definition}{Definition}

\newtheorem{theorem}[definition]{Theorem}

\newtheorem{observation}[definition]{Observation}

\newtheorem{lemma}[definition]{Lemma}

\newcommand{\ssp}{\textrm{\rm SSP}}
\newcommand{\tstab}{\textrm{\rm TSTAB}}
\newcommand{\charf}[1]{\raisebox{\depth}{\(\chi\)}_{#1}}

\DeclareMathOperator{\dist}{dist}

\newcommand{\Z}{\ensuremath{\mathbb{Z}}} 
\newcommand{\R}{\ensuremath{\mathbb{R}}} 

\usepackage{tikz}
\usetikzlibrary{arrows,positioning,intersections}
\usetikzlibrary{backgrounds}
\usetikzlibrary{trees}

\tikzstyle{hvertex}=[circle,inner sep=0.cm, minimum size=1mm, fill=white, draw=black]
\tikzstyle{novertex}=[circle,inner sep=0.cm, fill=white, draw=white]
\tikzstyle{hedge}=[thick]
\tikzstyle{contractedge}=[->,color=gray]

%
%

\title{Reducing quadrangulations of the sphere and the projective plane}
\author{Elke Fuchs and Laura Gellert}
\date{}
\begin{document}

\maketitle

\begin{abstract}
We show that every quadrangulation of the sphere
can be transformed into a $4$-cycle by deletions of degree-$2$ vertices and by  $t$-contractions at degree-$3$ vertices.  A $t$-contraction simultaneously contracts all incident edges at a vertex with stable neighbourhood. The operation is mainly used in the field of $t$-perfect graphs. \\
We further show that a non-bipartite quadrangulation of the projective plane can be transformed into an odd wheel by $t$-contractions and deletions of degree-$2$ vertices. \\
We deduce that a quadrangulation of the projective plane is (strongly) $t$-perfect if and only if the graph is bipartite.
\end{abstract}

\section{Introduction}


For characterising quadrangulations of the sphere, it is very useful to transform a quadrangulation into a slightly smaller one. Such reductions are mainly based on the following idea: Given a class of quadrangulations, a sequence of particular face-contractions transforms every member of the class into a $4$-cycle; see eg   Brinkmann et~al~\cite{BGGMTW05}, Nakamoto~\cite{Nakamoto99}, Negami and Nakamoto~\cite{NeNa93}, and Broersma et al.~\cite{BDG93}. 
A \emph{face-contraction} identifies two non-adjacent vertices $v_1, v_3$ of a $4$-face $v_1, v_2,v_3,v_4$ in which the common neighbours of $v_1$ and $v_3$ are only $v_2$ and $v_4$. 
A somewhat different approach was made by Bau et al.~\cite{BMNZ14}. They showed that any quadrangulation of the sphere can be transformed into a $4$-cycle by a sequence of  deletions of degree-$2$ vertices and so called hexagonal contractions.
The obtained graph is a minor of the previous graph. Both operations can be obtained from face-contractions. 

\medskip

We provide a new way to reduce arbitrary quadrangulations of the sphere to a $4$-cycle. Our operations are minor-operations --- in contrast to face-contractions.
We use deletions of degree-$2$ vertices and $t$-contractions. A \emph{$t$-contraction} simultaneously contracts all incident edges of a vertex with stable neighbourhood and deletes all multiple edges. The operation is mainly used in the field of $t$-perfection.  Face-contractions cannot be obtained from $t$-contractions. 
We restrict ourselves to $t$-contractions at vertices that are only contained in $4$-cycles whose interior does not contain a vertex. 
\begin{gather}
\mbox{These $t$-contractions and deletions of degree-$2$ vertices  } \nonumber \sloppy \\
\mbox{ can be obtained from a sequence of face-contractions.} \label{eq:operations} \sloppy
\end{gather}
 Figure~\ref{fig:facecon} illustrates this. The restriction on the applicable $t$-contractions makes sure that all face-contractions can be applied, ie that all identified vertices are non-adjacent and have no common neighbours besides the two other vertices of their $4$-face.

\begin{figure}[bht]
\begin{center}
\begin{tikzpicture}[scale = 1]
\begin{scope}
\node[hvertex] (v) at (0,0){};
\node[hvertex] (x1) at (0,1){};
\node[hvertex] (x2) at (0,-1){};
\node[hvertex] (w) at (1,0){};
\node[hvertex] (w2) at (-1,0){};
\node[novertex] (l1) at (-0.5,0){\textcolor{gray}{\small{1}}};


\draw[hedge] (v) -- (x1);
\draw[hedge] (w) -- (x1);
\draw[hedge] (v) -- (x2);
\draw[hedge] (w) -- (x2);
\draw[hedge] (w2) -- (x2);
\draw[hedge] (w2) -- (x1);
\draw[contractedge] (v) -- (l1);
\draw[contractedge] (w2) -- (l1);
\end{scope}

\begin{scope}[shift={(3,0)}]
\node[hvertex] (v) at (0,0){};
\node[hvertex] (c1) at (0,1){};
\node[hvertex] (c2) at (1,0.5){};
\node[hvertex] (c3) at (1,-0.5){};
\node[hvertex] (c4) at (0,-1){};
\node[hvertex] (c5) at (-1,-0.5){};
\node[hvertex] (c6) at (-1,0.5){};

\draw[hedge] (c1) -- (v);
\draw[hedge] (c3) -- (v);
\draw[hedge] (c5) -- (v);
\draw[hedge] (c1) -- (c2);
\draw[hedge] (c2) -- (c3);
\draw[hedge] (c3) -- (c4);
\draw[hedge] (c4) -- (c5);
\draw[hedge] (c5) -- (c6);
\draw[hedge] (c1) -- (c6);

\node[novertex] (l3) at (-0.5,0.25){\textcolor{gray}{\small{3}}};
\node[novertex] (l1) at (0.5,0.25){\textcolor{gray}{\small{1}}};
\node[novertex] (l2) at (0,-0.5){\textcolor{gray}{\small{2}}};
\draw[contractedge] (v) -- (l1);
\draw[contractedge] (c2) -- (l1);
\draw[contractedge] (c5) -- (l2);
\draw[contractedge] (c3) -- (l2);
\draw[contractedge] (c5) -- (l3);
\draw[contractedge] (c1) -- (l3);
\end{scope}

\begin{scope}[shift={(6,0)}, scale = 1]
\def\krad{1cm}
\def\rad{.6cm}

\def\angle{360/12}
\pgfmathtruncatemacro{\nminusone}{12-1}
\node[hvertex] (c) at (0,0){};
\foreach \i in {0,...,\nminusone}{
  \begin{scope}[on background layer]
    \draw[hedge] (270-\angle/2+\i*\angle:\krad) -- (270+\angle/2+\i*\angle:\krad);
  \end{scope}
  \node[hvertex] (v\i) at (270-\angle/2+\i*\angle:\krad){};
  }
\draw[hedge] (c) -- (v0);
\draw[hedge] (c) -- (v2);
\draw[hedge] (c) -- (v4);
\draw[hedge] (c) -- (v6);
\draw[hedge] (c) -- (v8);
\draw[hedge] (c) -- (v10);

\def\angle{360/6}
\foreach \i in {1,...,6}{
  \node[novertex] (l\i) at (255-\angle/2+\i*\angle:\rad){\textcolor{gray}{\small{\i}}};
  }
  
\draw[contractedge] (c) -- (l1);
\draw[contractedge] (v1) -- (l1);
\draw[contractedge] (v2) -- (l2);
\draw[contractedge] (v4) -- (l2);
\draw[contractedge] (v4) -- (l3);
\draw[contractedge] (v6) -- (l3);
\draw[contractedge] (v6) -- (l4);
\draw[contractedge] (v8) -- (l4);
\draw[contractedge] (v8) -- (l5);
\draw[contractedge] (v10) -- (l5);
\draw[contractedge] (v10) -- (l6);
\draw[contractedge] (v0) -- (l6);

\end{scope}

\end{tikzpicture}
\end{center}

\caption{Face-contractions that give a deletion of a degree-$2$ vertex, a $t$-contraction at a degree-$3$ and a degree-$6$ vertex}
\label{fig:facecon}
\end{figure}
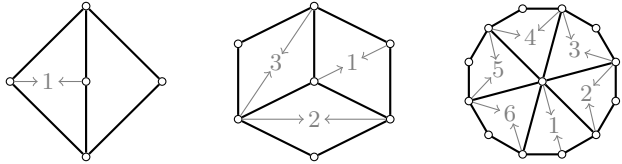

We prove:
\begin{theorem}\label{thm:plane_C4_irreducible}
Let $G$ be a quadrangulation of the sphere. Then, there is a sequence of $t$-contractions at degree-$3$ vertices and deletions of degree-$2$ vertices that transforms $G$ into a $4$-cycle. During the whole process, the graph remains a quadrangulation. 
\end{theorem}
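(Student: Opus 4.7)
The plan is to prove the theorem by induction on $|V(G)|$, with base case $|V(G)|=4$ where $G=C_4$. For the inductive step I would first apply Euler's formula together with the face-edge count $2e=4f$ to get $e=2|V(G)|-4$, so that the average degree is strictly less than $4$ and hence $\delta(G)\le 3$. Since $G$ is bipartite (all face boundaries are even cycles), every neighbourhood is independent and the stability hypothesis of a $t$-contraction comes for free; only the restriction that the contracted vertex lies in no $4$-cycle with a non-empty interior still has to be verified.

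The degree-$2$ case is easy: if $v$ has degree $2$ with neighbours $u,w$, the two faces incident to $v$ are $4$-cycles $(u,v,w,x_1)$ and $(u,v,w,x_2)$, and simplicity of $G$ forces $x_1\neq x_2$. Deleting $v$ merges them into the single $4$-face $(u,x_1,w,x_2)$, so $G-v$ is a simple quadrangulation on one fewer vertex and the induction hypothesis applies.

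The main work lies in the case $\delta(G)=3$: one must find a degree-$3$ vertex $v$ through which every $4$-cycle bounds a face. I would proceed by a minimality argument. Call a $4$-cycle \emph{separating} if both of its sides contain at least one vertex. If $G$ has no separating $4$-cycle, every $4$-cycle is a face and any degree-$3$ vertex satisfies the restriction. Otherwise, pick a separating $4$-cycle $C$ whose smaller side $R$ has minimum vertex count. A Euler-type count on the disk-quadrangulation $G_1$ bounded by $C$ (degree sum $4|V(G_1)|-8$, with at least one vertex of $C$ necessarily joined to $R$) forces some vertex $v\in R$ to have degree exactly $3$ in $G$. Any $4$-cycle $C'$ through $v$ that is not a face is again separating, and its smaller side is trapped strictly inside $R$, contradicting the minimality of $(C,R)$.

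Once such a $v$ is chosen, the goodness condition says that for each pair of neighbours $u_i,u_j$ of $v$ the only common neighbours are $v$ and the opposite vertex $x_{ij}$ of the $4$-face $(v,u_i,x_{ij},u_j)$. Consequently, the only multi-edges produced by the $t$-contraction are the three edges $v^*x_{ij}$ (collapsed by the operation), and no face of $G$ outside the three at $v$ can contain two of the $u_i$, so every remaining face becomes a genuine $4$-face with $u_i$ replaced by $v^*$ while the three faces at $v$ collapse cleanly. Thus the contracted graph is a simple quadrangulation on fewer vertices, and the induction closes. The main obstacle I anticipate is the minimality step above: ruling out bad $4$-cycles $C'$ through $v$ whose vertex set meets the boundary $C$ requires a careful planar-embedding argument to exhibit a still-smaller separating $4$-cycle, and this is the only genuinely non-routine part of the proof.
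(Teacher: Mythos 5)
Your proposal follows essentially the same route as the paper: Euler's formula yields a vertex of degree at most $3$, bipartiteness of sphere quadrangulations gives the stable neighbourhood for free, and a minimal separating $4$-cycle is used to locate a degree-$2$ or degree-$3$ vertex whose every $4$-cycle bounds a face, after which the contraction visibly preserves the quadrangulation. The one step you flag as non-routine --- showing that a non-facial $4$-cycle through $v$ that meets or leaves the boundary $C$ still forces a smaller separating $4$-cycle --- is precisely the point the paper also treats tersely (via an innermost element of its family $\mathcal{C}$), and your minimum-vertex-count choice of $R$ closes it in the same way.
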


The proof of Theorem~\ref{thm:plane_C4_irreducible} can be found in Section~\ref{sec:quadrangulations}.
It is easy to see that both operations used in Theorem~\ref{thm:plane_C4_irreducible} are necessary.
By \eqref{eq:operations}, Theorem~\ref{thm:plane_C4_irreducible} implies: 
\begin{gather*}
\mbox{Any quadrangulation of the sphere can be transformed }  \sloppy \\ 
\mbox{into  a $4$-cycle by a sequence of face-contractions.} \sloppy
\end{gather*}

Via the dual graph, quadrangulations of the sphere are in one-to-one correspondence with planar $4$-regular (not necessarily simple) graphs. Theorem~\ref{thm:plane_C4_irreducible} thus implies a method to reduce all $4$-regular planar graphs to the graph on two vertices and four parallel edges. 
Broersma et al.~\cite{BDG93}, Lehel~\cite{Le81}, and Manca~\cite{Ma79} analysed methods to reduce $4$-regular planar graphs to the octahedron graph. 

\medskip

In the second part of this paper, we consider quadrangulations of the projective plane. 
We use Theorem~\ref{thm:plane_C4_irreducible} to reduce all non-bipartite quadrangulations of the projective plane to an odd wheel. 
A \emph{$p$-wheel} $W_p$ is a graph consisting of a cycle $(w_1, \ldots, w_p,w_1)$ and a vertex $v$ adjacent to all vertices of the cycle. A wheel $W_p$ is an \emph{odd wheel}, if $p$ is odd. Figure~\ref{fig:wheels} shows some odd wheels.

\begin{theorem}\label{thm:pp_odd_wheels_irreducible}
Let $G$ be a non-bipartite quadrangulation of the projective plane. Then, there is a sequence of $t$-contractions and deletions of degree-$2$ vertices that transforms $G$ into an odd wheel. During the whole process, the graph remains a non-bipartite quadrangulation.
\end{theorem}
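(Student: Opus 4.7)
The natural plan is induction on $|V(G)|$: either $G$ is already an odd wheel, in which case there is nothing to prove, or I exhibit one reduction that produces a smaller non-bipartite projective plane quadrangulation, and invoke the inductive hypothesis.

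If $G$ has a degree-$2$ vertex $v$ with neighbours $u_1,u_2$, then both $4$-faces at $v$ have the form $v u_1 x_i u_2$, and deleting $v$ merges them into the single quadrilateral $u_1 x_1 u_2 x_2$, so the reduced graph is again a quadrangulation of the projective plane. The move preserves non-bipartiteness as well: any bipartition of $G\setminus v$ must place $u_1$ and $u_2$ on the same side (their common neighbour $x_1$ sits opposite), and so it extends to a bipartition of $G$ by placing $v$ across from them. Thus degree-$2$ deletions can always be performed safely.

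Assume therefore that $G$ has minimum degree $3$. Euler's formula for projective-plane quadrangulations yields $|E|=2(|V|-1)$, so the degree sum is $4|V|-4$ and $G$ has at least four vertices of degree exactly $3$. I would then try to $t$-contract at a vertex $v$ whose neighbourhood is stable and each of whose incident $4$-cycles is a face. Under these hypotheses, as in the sphere setting of Theorem~\ref{thm:plane_C4_irreducible}, the move again produces a quadrangulation of the projective plane. Non-bipartiteness persists because any odd cycle of $G$ either avoids $v$ and its neighbours and is preserved, or passes through $v$ and shortens by exactly two edges (keeping its parity since $\deg v = 3$), or meets two or three of the $u_i$ but not $v$, in which case its image decomposes into shorter closed walks whose lengths sum to something odd, producing an odd cycle in the reduction.

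The main obstacle is to show that such a $v$ exists whenever $G$ is not an odd wheel. Unlike in the sphere case, a non-bipartite projective plane quadrangulation may contain triangles, so stability of a degree-$3$ neighbourhood is no longer automatic; moreover, non-contractible $4$-cycles can violate the ``face only'' condition. My plan is to fix a shortest non-contractible odd cycle $C$ of $G$, which exists by non-bipartiteness, and to apply Theorem~\ref{thm:plane_C4_irreducible} to the sphere-like portion of $G$ complementary to $C$; the reductions it furnishes give candidate degree-$3$ vertices away from $C$ on which the $t$-contraction is legal in $G$ as well. The residual case, in which every candidate is blocked by an obstructing triangle or non-facial $4$-cycle, ought to force each such obstruction to meet a common vertex $v_0$ whose closed neighbourhood is all of $V(G)$ and for which $G\setminus v_0 = C$, identifying $G$ as the odd wheel $W_{|C|}$. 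This rigidity step is the main technical content of the argument.
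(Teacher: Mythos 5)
Your high-level plan (induct, handle degree-$2$ vertices, then either find a legal $t$-contraction or recognise an odd wheel) matches the shape of the paper's argument, and your preparatory steps are fine. But the proof has a genuine gap exactly where you flag it: the ``rigidity step'' asserting that if every degree-$3$ vertex is blocked then $G$ is an odd wheel is stated as something that ``ought to'' hold, with no argument. This is not a routine verification --- it is the heart of the theorem, and it needs two topological inputs that are absent from your sketch. First, in a non-bipartite even embedding in the projective plane every odd cycle, in particular every triangle, is non-contractible; since any two non-contractible cycles in the projective plane intersect, all triangles of $G$ pairwise intersect. Starting from a degree-$3$ vertex lying in a triangle, this pairwise-intersection property is what forces all triangles through a single vertex $x_1$, and then (once every vertex lies in a triangle) forces $x_1$ to be universal; Euler's formula ($|E|=2|V|-2$) then shows $G-x_1$ is a cycle, so $G$ is a wheel. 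Second, you still must rule out even wheels: the paper proves separately that $W_{2k}$ ($k\ge 2$) admits no even embedding in the projective plane, and without that lemma your conclusion ``$G=W_{|C|}$ with $C$ odd'' is unsupported. Neither ingredient is hinted at in your proposal, so the case that actually terminates the induction is unproved.

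A secondary structural point: you look, at each step, for a single vertex at which \emph{both} obstructions (a triangle in the neighbourhood, and a non-facial $4$-cycle through the vertex) are simultaneously absent. The paper decouples these. It first makes the quadrangulation \emph{nice} --- no vertex inside any contractible $4$-cycle --- by cutting along such a $4$-cycle and running the sphere theorem on the interior; after that, the only possible obstruction to a $t$-contraction is a triangle, and the terminal objects are precisely the nice quadrangulations in which every vertex lies in a triangle. This decoupling is what makes the terminal case clean enough to classify. I'd recommend restructuring along those lines and then proving the two topological lemmas above; your preservation-of-non-bipartiteness discussion (odd closed walks contain odd cycles) can be kept as is.
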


The proof of this theorem can be found in Section~\ref{sec:quadrangulations}. It is easy to see that both operations used in this theorem are necessary.

\begin{figure}[htb]
\def\krad{0.6cm}
\newcommand{\wheel}[1]{
\def\angle{360/#1}
\pgfmathtruncatemacro{\nminusone}{#1-1}
\node[hvertex] (c) at (0,0){};
\foreach \i in {0,...,\nminusone}{
  \begin{scope}[on background layer]
    \draw[hedge] (270-\angle/2+\i*\angle:\krad) -- (270+\angle/2+\i*\angle:\krad);
  \end{scope}
  \node[hvertex] (v\i) at (270-\angle/2+\i*\angle:\krad){};
  \draw[hedge] (c) -- (v\i);
}
}

\begin{center}
\begin{tikzpicture}[scale=1]

\begin{scope}[shift={(0,0)}, rotate=37]
\wheel{3}
\end{scope}
\begin{scope}[shift={(2,0)}]
\wheel{5}
\end{scope}
\begin{scope}[shift={(4,0)}]
\wheel{7}
\end{scope}

\end{tikzpicture}
\end{center}
\caption{The odd wheels $W_3, W_5$ and $W_7$}
\label{fig:wheels}

\end{figure}

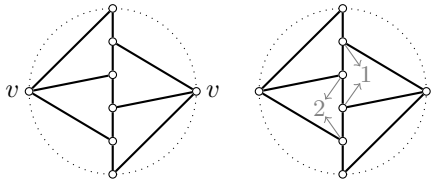
\begin{figure}[bht]
\begin{center}
\begin{tikzpicture}[scale = .55]
\begin{scope} [scale = .8]
\draw[dotted] (0,0.5) circle (2.5cm);
\node[hvertex] (cr) at (2.5,0.5){};
\node[hvertex] (cl) at (-2.5,0.5){};
\node[novertex] (ur) at (3,0.5){$v$};
\node[novertex] (ul) at (-3,0.5){$v$};
\def\dist{0.2}
\node[hvertex] (p6) at (0,3){};
\foreach \i in {1,2,3,4,5}{
    \node[hvertex] (p\i) at (0,-3+\i){};
}

\draw[hedge] (p1) -- (p2);
\draw[hedge] (p2) -- (p3);
\draw[hedge] (p3) -- (p4);
\draw[hedge] (p4) -- (p5);
\draw[hedge] (p5) -- (p6);

\draw[hedge] (cl) -- (p2);
\draw[hedge] (cl) -- (p4);
\draw[hedge] (cl) -- (p6);

\draw[hedge] (cr) -- (p1);
\draw[hedge] (cr) -- (p3);
\draw[hedge] (cr) -- (p5);
\end{scope}

\begin{scope} [shift={(5.5,0)}, scale = .8]
\draw[dotted] (0,0.5) circle (2.5cm);
\node[hvertex] (cr) at (2.5,0.5){};
\node[hvertex] (cl) at (-2.5,0.5){};
\def\dist{0.2}
\node[hvertex] (p6) at (0,3){};
\foreach \i in {1,2,3,4,5}{
    \node[hvertex] (p\i) at (0,-3+\i){};
}

\draw[hedge] (p1) -- (p2);
\draw[hedge] (p2) -- (p3);
\draw[hedge] (p3) -- (p4);
\draw[hedge] (p4) -- (p5);
\draw[hedge] (p5) -- (p6);

\draw[hedge] (cl) -- (p2);
\draw[hedge] (cl) -- (p4);
\draw[hedge] (cl) -- (p6);

\draw[hedge] (cr) -- (p1);
\draw[hedge] (cr) -- (p3);
\draw[hedge] (cr) -- (p5);

\node[novertex] (l1) at (.7,1){\textcolor{gray}{\small{1}}};
\draw[contractedge] (p3) -- (l1);
\draw[contractedge] (p5) -- (l1);

\node[novertex] (l2) at (-.7,0){\textcolor{gray}{\small{2}}};
\draw[contractedge] (p4) -- (l2);
\draw[contractedge] (p2) -- (l2);

\end{scope}

\end{tikzpicture}
\end{center}

\caption{An even embedding of $W_5$ in the projective plane and face-contractions that produce a smaller odd wheel. Opposite points on the dotted cycle are identified. }
\label{fig:pp_wheels}
\end{figure}

Negami and Nakamoto~\cite{NeNa93} showed that 
any non-bipartite quadrangulation of the projective plane can be transformed
into a $K_4$ by a sequence of face-contractions.
This result can be deduced from Theorem~\ref{thm:pp_odd_wheels_irreducible}:
By \eqref{eq:operations}, Theorem~\ref{thm:pp_odd_wheels_irreducible} implies that 
any non-bipartite quadrangulation of the projective plane can be transformed into an odd wheel by a sequence of face-contractions.
The odd wheel $W_{2k+1}$ can now be transformed into $W_{2k-1}$ --- and finally into $W_3=K_4$ --- by face-contractions (see Figure~\ref{fig:pp_wheels}).

Nakamoto~\cite{Nakamoto99} gave a reduction method based on face-contractions and so called $4$-cycle deletions for non-bipartite quadrangulations of the projective plane with minimum degree $3$.
Matsumoto et al.~\cite{MNY16} analysed quadrangulations of the projective plane with respect to hexagonal contractions while Nakamoto considered face-contractions for quadrangulations of the Klein~bottle~\cite{Nakamoto95} and the torus~\cite{Nakamoto96}. 
Youngs~\cite{You96} 
and Esperet and Stehl{\'{\i}}k~\cite{Es_Steh15} 
considered non-bipartite quadrangulations of the projective plane with regard to vertex-colourings and width-parameters. 



\medskip

Theorem~\ref{thm:pp_odd_wheels_irreducible} allows an application to the theory of $t$-perfection.
A graph $G$ is \emph{$t$-perfect} if its stable set polytope $\ssp(G)$ equals the polyhedron $\tstab(G)$. The  \emph{stable set polytope} $\ssp(G)$ is the convex hull of stable sets of $G$; the polyhedron \emph{$\tstab(G)$} is defined via non-negativity-, edge- and odd-cycle inequalities (see Section~\ref{sec:t-perf} for a precise definition).

If the system of inequalities defining $\tstab(G)$ is totally dual integral, the graph  $G$ is called \emph{strongly $t$-perfect}. Evidently, strong $t$-perfection implies $t$-perfection. It is not known whether the converse is also true.

\begin{theorem}\label{thm:t-perfect}
For every quadrangulation $G$ of the projective plane the following assertions are equivalent:
\begin{enumerate}[\rm(a)]
\item $G$ is $t$-perfect \label{item:t-perfect}
\item $G$ is strongly  $t$-perfect \label{item:strongly_t-perfect}
\item $G$ is bipartite \label{item:bipartite}
\end{enumerate}
\end{theorem}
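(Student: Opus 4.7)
The proof plan is to chain the three implications (c)$\Rightarrow$(b)$\Rightarrow$(a)$\Rightarrow$(c). The implication (b)$\Rightarrow$(a) is immediate from the definitions, since total dual integrality of the defining system forces integrality of $\tstab(G)$, hence $\tstab(G)=\ssp(G)$. The implication (c)$\Rightarrow$(b) is a classical result: every bipartite graph is strongly $t$-perfect, because in a bipartite graph the edge constraints already suffice to describe $\ssp(G)$ (no odd cycles occur) and the edge-incidence matrix of a bipartite graph is totally unimodular, so the system is totally dual integral. I would simply cite this standard fact from the $t$-perfection literature.

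The interesting direction is (a)$\Rightarrow$(c), which I would prove by contraposition using Theorem~\ref{thm:pp_odd_wheels_irreducible}. Suppose $G$ is a non-bipartite quadrangulation of the projective plane. By Theorem~\ref{thm:pp_odd_wheels_irreducible}, $G$ can be transformed into some odd wheel $W_{2k+1}$ by a sequence of $t$-contractions and deletions of degree-$2$ vertices. The plan is then to use two closure properties:
\begin{enumerate}[\rm(i)]
\item $t$-perfection is inherited by induced subgraphs, so in particular by deletions of degree-$2$ vertices;
\item $t$-perfection is preserved under $t$-contractions at vertices with stable neighbourhood --- this is the fundamental reason $t$-contractions are the natural minor operation in $t$-perfection theory.
\end{enumerate}
Both facts are standard results that I would quote from the $t$-perfection literature (they go back to work by Gerards and Shepherd on the structure of $t$-perfect graphs). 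Combining them, if $G$ were $t$-perfect, then the resulting odd wheel $W_{2k+1}$ would also be $t$-perfect.

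To conclude, I need the fact that no odd wheel $W_{2k+1}$ is $t$-perfect. For $W_3=K_4$ this is well known; for general odd wheels one can exhibit a fractional vertex of $\tstab(W_{2k+1})$ that is not in $\ssp(W_{2k+1})$: assign value $\tfrac12$ to every rim vertex and $0$ to the hub; this satisfies all edge and odd-cycle inequalities (the only odd cycles are the triangles through the hub, each contributing $0+\tfrac12+\tfrac12=1$), yet has objective value $\tfrac{2k+1}{2}>k=\alpha(W_{2k+1})$. Thus $W_{2k+1}$ is not $t$-perfect, contradicting the assumed $t$-perfection of~$G$, so $G$ must be bipartite.

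The main obstacle in this plan is simply ensuring that the two closure properties of $t$-perfection under the reduction operations are available with clean references and that the restricted form of $t$-contractions used in Theorem~\ref{thm:pp_odd_wheels_irreducible} still falls under the general preservation result; I would verify this by recalling that $t$-contractions in the literature are defined exactly at vertices with a stable neighbourhood, which matches the operations used here.
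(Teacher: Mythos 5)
Your overall architecture is exactly the paper's: chain (c)$\Rightarrow$(b)$\Rightarrow$(a) by standard facts, and prove (a)$\Rightarrow$(c) by contraposition, reducing a non-bipartite quadrangulation to an odd wheel via Theorem~\ref{thm:pp_odd_wheels_irreducible} and invoking the preservation of (strong) $t$-perfection under vertex deletion and $t$-contraction. That part is fine, and your concern about the restricted $t$-contractions is unfounded for the reason you state.

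However, your certificate that odd wheels are not $t$-perfect is wrong. The vector assigning $\tfrac12$ to every rim vertex and $0$ to the hub is \emph{not} in $\tstab(W_{2k+1})$: you overlooked that the rim itself is an induced odd cycle of length $2k+1$, so the odd-cycle inequality $\sum_{i}x_{w_i}\leq \lfloor(2k+1)/2\rfloor=k$ is among the defining inequalities, and your vector gives $\sum_i x_{w_i}=(2k+1)/2=k+\tfrac12>k$. (Already for $W_3=K_4$ the rim triangle gives $\tfrac32>1$.) The triangles through the hub are not ``the only odd cycles.'' The paper instead uses the all-$\tfrac13$ vector: it satisfies every edge inequality ($\tfrac23\leq1$) and every odd-cycle inequality ($(2\ell+1)/3\leq\ell$ for $\ell\geq1$), so it lies in $\tstab(W_{2k+1})$; it is not in $\ssp(W_{2k+1})$ because it violates the valid inequality $k\,x_v+\sum_i x_{w_i}\leq k$ (every stable set of $W_{2k+1}$ satisfies this, while the all-$\tfrac13$ vector gives $k+\tfrac13$). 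Note also that your fallback comparison with $\alpha(W_{2k+1})=k$ would not rescue a total-weight argument for the all-$\tfrac13$ vector when $k\geq2$, since its total weight $(2k+2)/3$ does not exceed $k$ there; the weighted inequality above is the clean way to finish. With the certificate repaired, your proof coincides with the paper's.
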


See Section~\ref{sec:t-perf} for precise definitions and for the proof.

A general treatment on $t$-perfect graphs may be found in Gr\"otschel, Lov\'asz and Schrijver~\cite[Ch.~9.1]{GLS88} as well as in Schrijver~\cite[Ch.~68]{LexBible}. We showed that triangulations of the projective plane are (strongly) $t$-perfect if and only if they are perfect and do not contain the complete graph $K_4$~\cite{triang17}. Bruhn and Benchetrit analysed $t$-perfection of triangulations of the sphere~\cite{Ben_Bru15}.
Boulala and Uhry~\cite{BouUhr79} established the $t$-perfection of series-parallel graphs. Ge\-rards~\cite{Gerards89} extended this to graphs that do not contain an 
\emph{odd-$K_4$} as a subgraph (an odd-$K_4$ is a subdivision of $K_4$ in which every triangle becomes an odd circuit).
Ge\-rards and Shepherd~\cite{GS98} characterised the graphs with all subgraphs $t$-perfect,
while Barahona and Mahjoub~\cite{BM94} described the $t$-imperfect subdivisions of $K_4$. 
Bruhn and Fuchs~\cite{Bru_Fu15} characterised $t$-perfection of $P_5$-free graphs by forbidden $t$-minors.

\section{Quadrangulations} \label{sec:quadrangulations}
 All the graphs mentioned here are finite and simple. We follow the notation of Diestel~\cite{Diestel}.
We begin by recalling several useful definitions related to surface-embedded graphs.
For further background on topological graph theory, we refer the reader to Gross and Tucker~\cite{Gross_Tucker} or 
Mohar and  Thomassen~\cite{Mohar_Thomassen}. 


An \emph{embedding} of a simple graph $G$ on a surface is a continuous one-to-one function from a topological representation of $G$ into the surface. 
For our purpose, it is convenient to abuse the terminology by referring to the image of $G$ as the  \emph{ the graph $G$}. The \emph{faces} of an embedding are the connected components of the complement of $G$. 
An embedding $G$ is \emph{even} if all faces are bounded by an even circuit. A \emph{quadrangulation} is an embedding where each face is bounded by a circuit of length $4$. 
A cycle $C$  is \emph{contractible} if $C$ separates the surface into two sets $S_C$ and $\overline{S_C}$ where $S_C$ is homeomorphic to an open disk in $\R^2$. 
 Note that for the sphere, $S_C$ and $\overline{S_C}$ are homeomorphic to an open disk. In contrast, for the plane and the projective plane, $\overline{S_C}$ is not homeomorphic to an open disk. 
For the plane and the projective plane , we call $S_C$ the \emph{interior} of $C$ and $\overline{S_C}$ the \emph{exterior} of $C$. 
Using the stereographic projection, it is easy to switch between embeddings in the sphere and the plane. In order to have an interior and an exterior of a contractible cycle, we will concentrate on quadrangulations of the plane (and the projective plane). 
Note that by the Jordan curve theorem, 
\begin{equation} \label{eq:plane->contractible_cycles}
\text{all cycles in the plane are contractible.}
\end{equation}
A cycle in a non-bipartite quadrangulation of the projective plane is contractible if and only if it has even length (see e.g.~\cite[Lemma 3.1]{Kai_Steh15}). 
As every non-bipartite even embedding is a subgraph of a non-bipartite quadrangulation, one can easily generalise this result.
\begin{observation} \label{obs:cycles_in_pp}
A cycle in a non-bipartite even embedding in the projective plane is contractible if and only it has even length. 
\end{observation}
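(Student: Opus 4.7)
The plan is to reduce the statement to the known quadrangulation case by embedding the given even embedding into a non-bipartite quadrangulation, and then to exploit the fact that contractibility is a purely topological property of the curve, independent of the ambient graph.

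Let $G$ be a non-bipartite even embedding of the projective plane and let $C$ be a cycle of $G$. First I would construct a non-bipartite quadrangulation $G'$ of the projective plane that contains $G$ (and hence $C$) as a subgraph. Any face $f$ of $G$ is bounded by an even closed walk, and for each face that is not already a $4$-face I would insert a new vertex $u_f$ in the interior of $f$ and join $u_f$ to every second vertex of the boundary walk of $f$. If the boundary has length $2k$ with vertices $v_1,\dots,v_{2k}$, this splits $f$ into the $k$ quadrilateral faces $u_f v_1 v_2 v_3,\; u_f v_3 v_4 v_5,\; \dots,\; u_f v_{2k-1} v_{2k} v_1$. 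The resulting embedding $G'$ is a quadrangulation, and since $G\subseteq G'$ and $G$ is non-bipartite, $G'$ is a non-bipartite quadrangulation of the projective plane.

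Next I would apply the known result for quadrangulations (cited as~\cite[Lemma 3.1]{Kai_Steh15}) to $G'$: a cycle of $G'$ is contractible if and only if it has even length. Since $C$ is a cycle of $G'$, this characterisation gives the desired equivalence. The only subtlety is to observe that ``contractibility of $C$'' is defined in terms of the topological image of $C$ in the projective plane, so it does not depend on whether we view $C$ as a cycle of $G$ or as a cycle of $G'$; both points of view yield the same simple closed curve on the surface.

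The only genuinely delicate point I anticipate is ensuring that the auxiliary quadrangulation $G'$ is well defined, in particular that the new edges $u_f v_{2i-1}$ can be drawn in $f$ without crossings and that the construction produces a simple graph. Both are routine: the vertex $u_f$ lies in the open disk bounded by $f$, so the new edges can be drawn pairwise disjointly inside $f$, and the newly added edges are incident to the fresh vertex $u_f$, so no multi-edge is created. With this construction in hand, the reduction to the quadrangulation case immediately yields Observation~\ref{obs:cycles_in_pp}.
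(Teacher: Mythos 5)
Your proposal is correct and follows exactly the route the paper takes: the paper justifies the observation by remarking that every non-bipartite even embedding is a subgraph of a non-bipartite quadrangulation and then invoking the cited lemma for quadrangulations. You merely make explicit the standard construction (a hub vertex in each face joined to every second boundary vertex) that the paper leaves implicit, together with the (correct) remark that contractibility of the curve does not depend on the ambient graph.
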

An embedding is a \emph{$2$-cell embedding} if each face is homeomorphic to an open disk.
It is well-known that embeddings of  $2$-connected graphs in the plane are $2$-cell embeddings. 
A non-bipartite quadrangulation of the projective plane contains a non-contractible cycle; see Observation~\ref{obs:cycles_in_pp}. The complement of this cycle in the projective plane is homeomorphic to  an open disk. Thus, we observe:
\begin{observation} \label{obs:closed-cell-embedding}
Every quadrangulation of the plane and every non-bipartite quadrangulation of the projective plane is a $2$-cell embedding. 
\end{observation}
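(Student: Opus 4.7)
My plan is to verify, face by face, that every face $F$ is homeomorphic to an open disk. Since $G$ is a quadrangulation in either setting, the boundary $\partial F$ is a simple $4$-cycle $C_F$. For a quadrangulation of the plane, I would invoke the Jordan curve theorem: viewing the embedding on the sphere via stereographic projection, $C_F$ separates the sphere into two open disks, and the face $F$ must be one of them, hence homeomorphic to an open disk.

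For a non-bipartite quadrangulation of the projective plane, the key input is Observation~\ref{obs:cycles_in_pp}: the boundary $C_F$ is a $4$-cycle, hence even, hence contractible. A contractible simple closed curve in the projective plane separates the surface into an open disk $D$ and an open M\"obius band $M$, and the face $F$ must coincide with one of these two complementary regions. I would rule out $F = M$ by contradiction: if $F = M$, then $G$, being disjoint from the open face $F$, would lie entirely in the closed disk $\overline{D}$. Since a closed disk is simply connected, every cycle of $G$ would then be null-homotopic, and so contractible in the projective plane. This contradicts Observation~\ref{obs:cycles_in_pp} applied to an odd cycle of $G$, which exists by non-bipartiteness and must be non-contractible. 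Hence $F = D$, an open disk.

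The main obstacle, such as it is, lies in the topological bookkeeping for the projective-plane case: one must argue cleanly that a face whose boundary is a simple closed curve really coincides with one of the two complementary open regions of that curve, and that under the assumption $F = M$ the remainder of $G$ is confined to the closure of the other region. This demands a little care about vertices of $C_F$ that lie on the common boundary of $D$ and $M$, but it should not require any genuinely new topological input beyond the characterisation of contractible cycles already supplied by Observation~\ref{obs:cycles_in_pp}.
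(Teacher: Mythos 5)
Your proof is correct, but it is organised differently from the paper's argument. The paper proceeds globally: for the plane it simply invokes the well-known fact that embeddings of $2$-connected graphs in the plane are $2$-cell embeddings, and for the projective plane it picks a single non-contractible cycle (which exists by Observation~\ref{obs:cycles_in_pp} and non-bipartiteness), notes that its complement is an open disk, and thereby reduces the whole projective-plane case to the planar one. You instead argue locally, face by face: the boundary $4$-cycle of a face is even, hence contractible by Observation~\ref{obs:cycles_in_pp}, so it splits the projective plane into an open disk and an open M\"obius band; a standard open-and-closed argument identifies the face with one of the two complementary regions, and the M\"obius-band alternative is excluded because it would confine $G$ to a closed disk and force every cycle of $G$ to be contractible, contradicting the non-contractibility of an odd cycle. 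Both routes use Observation~\ref{obs:cycles_in_pp} and the existence of a non-contractible odd cycle as the essential input; yours trades the paper's one-line reduction for a slightly longer but more self-contained analysis that avoids having to track how faces behave under cutting along the non-contractible cycle, and it also handles the planar case uniformly by the same Jordan--Schoenflies argument applied on the sphere (which is the right reading of the statement, since the unbounded face of a plane embedding is only a $2$-cell after adding the point at infinity). One small point to keep explicit: the identification of a face $F$ with a full complementary component of its boundary cycle $C_F$ uses that the topological boundary of $F$ is exactly the point set of $C_F$, which is what the paper's definition of a quadrangulation (every face bounded by a $4$-circuit, hence by a $4$-cycle in a simple graph) provides.
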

This observation makes sure that we can apply Euler's formula to all the considered quadrangulations. 
A simple graph cannot contain a $4$-circuit that is not a $4$-cycle. Thus, note that every face of a quadrangulation is bounded by a cycle. 

It is easy to see that 
\begin{equation} \label{eq:quad_plane->bipartite}
 \text{all quadrangulations of the plane are bipartite.}
\end{equation}

We first take a closer look at deletions of degree-$2$ vertices in graphs that are not the $4$-cycle $C_4$.

\begin{observation} \label{obs:deg2_vertex}
Let $G\not= C_4$ be a quadrangulation of the plane or the projective plane 
that contains a vertex $v$ of degree $2$. Then, $G-v$ is again a quadrangulation. 
\end{observation}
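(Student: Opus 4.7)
My plan is a routine local analysis around the degree-$2$ vertex $v$. Let $u,w$ be the two (necessarily distinct) neighbours of $v$. Since every face of a quadrangulation is a $4$-cycle, and $v$ has degree $2$, there are exactly two face-corners at $v$; I first claim that these corners belong to two distinct faces $F_1 \neq F_2$. Indeed, if they belonged to a single face $F$, then the boundary walk of $F$ would traverse $v$ twice, giving a closed walk of length at least $6$, which contradicts the fact that $F$ is bounded by a $4$-cycle. Hence $F_1$ and $F_2$ are distinct, and each is bounded by a $4$-cycle containing the path $u,v,w$; write $F_i = (u,v,w,x_i)$ for $i = 1,2$, where $x_i \notin \{u,w\}$.

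The key step is to show $x_1 \neq x_2$. Suppose for contradiction $x_1 = x_2 = x$, so that the same $4$-cycle $C = (u,v,w,x)$ bounds both faces $F_1$ and $F_2$. By Observation~\ref{obs:closed-cell-embedding} each $F_i$ is a $2$-cell, i.e.\ a topological open disk whose closure is glued to the surface along $C$. In the planar case $C$ separates the sphere into two disks, both of which must be $F_1$ and $F_2$; but then $G$ contains no vertices or edges besides those of $C$, contradicting $G \neq C_4$. In the projective plane a simple closed curve is either (a) two-sided and separating, bounding a disk and a Möbius band, or (b) one-sided, in which case a small neighbourhood is a Möbius band and the two local sides of any edge of $C$ are connected globally. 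In case (a) one of $F_1,F_2$ would have to be the Möbius band side, which cannot be a $2$-cell; in case (b) the two local sides of each edge of $C$ coincide globally, forcing $F_1 = F_2$. Either way we get a contradiction, so $x_1 \neq x_2$.

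Having ruled out $x_1 = x_2$, I finish by describing $G - v$. Deleting $v$ and its two incident edges removes the two face-corners at $v$ and merges $F_1$ and $F_2$ into a single region whose boundary walk is obtained by concatenating the portions $w \to x_1 \to u$ of $\partial F_1$ and $u \to x_2 \to w$ of $\partial F_2$. This walk is $u, x_1, w, x_2, u$, which has four pairwise distinct vertices (we already have $u \neq w$, $x_1 \neq x_2$, and $x_i \notin \{u,w\}$), hence is a genuine $4$-cycle of $G - v$. All other faces of $G$ are unchanged, so every face of $G - v$ is bounded by a $4$-cycle, and $G - v$ is still an embedding on the same surface. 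Thus $G - v$ is a quadrangulation. The only subtle point is the projective-plane case of the step $x_1 \neq x_2$; once that is settled, everything else is immediate.
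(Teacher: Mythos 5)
Your proof is correct and follows essentially the same route as the paper: identify the two quadrilateral faces meeting at $v$, observe that their fourth vertices are distinct, and merge them into a new $4$-face. The only difference is one of detail --- the paper simply asserts the existence of distinct vertices $s,t$ completing the two faces, whereas you justify this distinctness (including the projective-plane case) explicitly.
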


\begin{proof}
Let  $u$ and $u'$ be the two neighbours of $v$. Then, there are distinct vertices $s,t$ such that the cycles $(u,v,u',s,u) $ and $(u,v,u',t,u)$ are bounding a face. 
Thus, $(u,s,u',t,u)$ is a contractible $4$-cycle whose interior contains only $v$ and $G-v$ is again a quadrangulation.  
\end{proof}

We now take a closer look at $t$-contractions.

\begin{lemma} \label{lem:quad_stays_quad}
Let $G$ be a quadrangulation of the plane or a non-bipartite quadrangulation of the projective plane. Let $G'$ be obtained from $G$ by a $t$-contraction at $v$. If 
 $v$ is not a vertex of a contractible $4$-cycle with some vertices in its interior, then $G'$ is again a quadrangu\-lation.
\end{lemma}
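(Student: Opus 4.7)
The plan is to use the hypothesis to pin down the local structure around $v$ completely and then verify face-by-face that the $t$-contraction yields a quadrangulation.

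Let $u_1,\ldots,u_d$ be the neighbours of $v$ in the cyclic order induced by the embedding, and let $F_i$ denote the face of $G$ bounded by $(v,u_i,w_i,u_{i+1})$ (indices modulo $d$); these are exactly the faces incident to $v$. Stability of $N(v)$ immediately gives $u_iu_j \notin E(G)$ and $w_i \neq u_j$ for all $i,j$. The structural heart of the proof is the claim that \emph{every $4$-cycle of $G$ through $v$ equals some $F_k$}. By \eqref{eq:plane->contractible_cycles} for the plane and Observation~\ref{obs:cycles_in_pp} for the projective plane, every $4$-cycle in $G$ is contractible; by the hypothesis it then has no vertices in its interior; and a quick local argument (in a quadrangulation, a chord of $C$ would create a triangular face in the interior, which is forbidden, so the interior of such a $C$ is a single $4$-face) pins $C$ down as one of the $F_k$. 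Two consequences follow: any common neighbour $w\neq v$ of distinct $u_i,u_j$ forces $\{u_i,u_j\}=\{u_k,u_{k+1}\}$ and $w=w_k$; and in particular the $w_i$ are pairwise distinct.

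With this in hand, the $t$-contraction takes a very explicit form. Stability prevents loops, and the only multiple edges are the $d$ parallel pairs $v^*w_i$, one copy arising from $u_iw_i$ and one from $u_{i+1}w_i$, which are collapsed to single edges. I would then classify the faces of the resulting graph $G'$: faces of $G$ disjoint from $\{v\}\cup N(v)$ are unchanged; a face $F\neq F_j$ incident to some $u_i$ contains $u_i$ as its unique vertex in $N(v)$ (two of them would share two common neighbours other than $v$, contradicting the structural claim), so replacing $u_i$ by $v^*$ keeps $F$ a $4$-cycle; and each $F_i$ becomes a digon bounded by its two parallel $v^*w_i$-edges which, upon simplification, merges with the adjacent $4$-face of $G$ across (say) $w_iu_{i+1}$, producing a new face of the form $(v^*,w_i,a,b)$. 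Since edge contractions and multi-edge simplifications of a $2$-cell embedding yield a $2$-cell embedding, $G'$ is a $2$-cell embedding and hence a quadrangulation.

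The main obstacle, I expect, is the final bookkeeping around the digons. One needs that the two faces of $G$ adjacent to $F_i$ across its sides $u_iw_i$ and $w_iu_{i+1}$ are distinct (otherwise they would share two edges, forcing an extra common neighbour of $u_i,u_{i+1}$ besides $v$ and $w_i$) and contain no $u_j$ beyond the obvious one (again by the common-neighbour restriction). Both statements are immediate from the structural claim, so each digon merges cleanly with a single $4$-face and the merged boundary is indeed a $4$-cycle; the remaining verification is routine but must be written out carefully.
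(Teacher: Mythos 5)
Your proof is correct in substance and follows the same strategy as the paper's: perform the identification at $v$, observe that the faces incident to $v$ collapse to digons with empty interior --- this is exactly where the hypothesis enters, via \eqref{eq:plane->contractible_cycles} and Observation~\ref{obs:cycles_in_pp} --- and absorb each digon into a neighbouring $4$-face when the parallel edges are simplified. In one respect you are more careful than the paper: its proof asserts that the identification ``leaves every cycle not containing $v$ untouched'', which is false for a cycle passing through two neighbours of $v$ (such a cycle becomes a non-simple closed walk). Your structural claim --- every $4$-cycle through $v$ bounds a face, hence two neighbours $u_i,u_j$ of $v$ have no common neighbours beyond $v$ and the expected $w_k$ --- is precisely what rules out a face boundary of $G$ meeting $N(v)$ twice, which would otherwise degenerate into a closed walk that is not a circuit. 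So your write-up closes a small gap in the published argument rather than opening one.

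One caveat: both of your uniqueness arguments (a face $F\neq F_j$ meets $N(v)$ in a unique vertex; the $w_i$ are pairwise distinct) silently assume $\deg v\geq 3$. For $\deg v=2$ the pair $\{u_1,u_2\}$ coincides with $\{u_k,u_{k+1}\}$ for both $k=1,2$, so ``two common neighbours other than $v$'' is not a contradiction --- and indeed the lemma as stated fails there: embed $K_{2,3}$ in the plane so that the degree-$2$ vertex $v$ lies only on the two bounded faces; every $4$-cycle through $v$ then bounds a face, yet the $t$-contraction at $v$ produces a path. (For $\deg v=3$, pairwise distinctness of the $w_i$ also needs the extra observation that $w_1=w_2=w_3$ forces $G=K_{2,3}$, whose outer $4$-cycle through $v$ has a vertex in its interior, contradicting the hypothesis; it is not quite immediate from the common-neighbour statement alone.) Since the paper only ever applies the lemma at vertices of degree at least $3$, this costs nothing, but you should state the degree assumption explicitly where you invoke uniqueness.
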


\begin{proof} Let $G''$ be obtained from $G$ by the operation that identifies $v$ with all its neighbours but does not delete multiple edges. This operation leaves every cycle not containing $v$ untouched, transforms every other cycle $C$ into a cycle of length $|C|-2$, and creates no new cycles. Therefore, all cycles bounding faces of $G''$ are of size $4$ or $2$. The graphs $G'$ and $G''$ differ only in the property that $G''$ has some double edges. These double edges form $2$-cycles that arise from $4$-cycles containing $v$. As all these $4$-cycles are contractible (see~\eqref{eq:plane->contractible_cycles} and  Observation~\ref{obs:cycles_in_pp}) with no vertex in their interior, the $2$-cycles are also contractible and contain no vertex in its interior. Deletion of all double edges now gives $G'$ --- an embedded graph where all faces are of size $4$.

\end{proof}

 Lemma~\ref{lem:quad_stays_quad} enables us to prove the following statement that directly implies Theorem~\ref{thm:plane_C4_irreducible}. 

\begin{lemma}\label{lem:plane_C4_irreducible}
Let $G$ be a quadrangulation of the plane. Then, there is a sequence of
\begin{itemize}
\item $t$-contractions at degree-$3$ vertices that are only contained in $4$-cycles whose interior does not contain a vertex. 
\item deletions of degree-$2$ vertices 
\end{itemize}
that transforms $G$ into a $4$-cycle. During the whole process, the graph remains a quadrangulation. 
\end{lemma}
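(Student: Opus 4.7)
My plan is to prove this by induction on $|V(G)|$, with base case $G = C_4$. For the inductive step with $G \neq C_4$, Euler's formula for planar quadrangulations yields $|E(G)| = 2|V(G)| - 4$, so $G$ has a vertex of degree at most $3$. A degree-$2$ vertex can be deleted via Observation~\ref{obs:deg2_vertex} and the inductive hypothesis applied, so I may assume $\delta(G) = 3$. I then need to find a \emph{good} degree-$3$ vertex---one all of whose $4$-cycles bound faces of $G$---so that Lemma~\ref{lem:quad_stays_quad} permits the $t$-contraction.

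If $G$ has no $4$-cycle with non-empty interior, then any degree-$3$ vertex is good. Otherwise, I would choose a $4$-cycle $C$ that minimises $k := |V(G) \cap \mathrm{int}(C)|$ subject to $k \geq 1$, and set $H := G[C \cup \mathrm{int}(C)]$; endowed with the embedding inherited from $G$ (with $C$ as outer face), $H$ is a planar quadrangulation. The minimality of $k$ forces every $4$-cycle $C' \neq C$ contained in the closed disk bounded by $C$ to have strictly fewer interior vertices than $C$, so its interior is empty and $C'$ bounds a face of $H$.

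The main step is a counting argument to locate a degree-$3$ interior vertex $u$ of $H$ with at most one neighbour on $C$. Writing $e_{ic}$ for the number of $H$-edges between $\mathrm{int}(C)$ and $C$, Euler's formula gives $|E(H)| = 2k + 4$ and a total interior degree of $4k - e_{ic}$. Since interior vertices have equal degrees in $G$ and $H$, the hypothesis $\delta(G) \geq 3$ implies $e_{ic} \leq k$ and that at least $e_{ic}$ interior vertices have degree exactly~$3$. Bipartiteness of planar quadrangulations (see~\eqref{eq:quad_plane->bipartite}) restricts any interior vertex to at most two neighbours on $C$, and those two must be opposite corners of $C$; hence at most $\lfloor e_{ic}/2 \rfloor$ interior vertices have two $C$-neighbours. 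Combined with $e_{ic} \geq 1$ (forced by $k \geq 1$ and the quadrangulation property of $H$), this produces a degree-$3$ interior vertex $u$ with at most one $C$-neighbour.

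The remaining task will be to verify that such a $u$ is good in $G$. Any $4$-cycle through $u$ has the form $(u, a, y, b, u)$, where $a, b$ are neighbours of $u$ and $y$ a common neighbour; since $u$ has at most one $C$-neighbour, at least one of $a, b$ lies strictly inside $C$, and planarity then forces $y \in \overline{\mathrm{int}(C)}$, so the whole $4$-cycle lies in $H$. As it contains $u \in \mathrm{int}(C)$ it is distinct from $C$, and by the minimality property of $C$ it bounds a face of $H$, so it has empty interior in $G$ too. Thus $u$ is good, a $t$-contraction at $u$ (via Lemma~\ref{lem:quad_stays_quad}) reduces the instance, and the inductive hypothesis completes the proof. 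The hardest part will be the counting step, where bipartiteness of plane quadrangulations enters crucially in bounding the number of interior vertices with two $C$-neighbours.
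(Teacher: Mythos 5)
Your proof is correct and follows essentially the same route as the paper's: pick an innermost contractible $4$-cycle $C$ with non-empty interior, use Euler's formula to find a vertex of degree $2$ or $3$ inside it, and reduce by deletion or $t$-contraction. Your extra counting step via $e_{ic}$ --- producing a degree-$3$ interior vertex with at most one neighbour on $C$ and verifying that every $4$-cycle through it stays in the closed disk --- is a refinement of a point the paper treats only implicitly, namely that the chosen degree-$3$ vertex really is contained only in $4$-cycles with empty interior.
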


\begin{proof}
Let $\mathcal{C}$ be the set of all contractible $4$-cycles whose interior contains some vertices of $G$. Note that $\mathcal{C}$ contains the $4$-cycle bounding the outer face unless $G=C_4$.

Let $C \in \mathcal{C}$ be a contractible $4$-cycle whose interior does not contain another element of $\mathcal{C}$. 
We will first see that the interior of $C$ contains a vertex of degree $2$ or $3$: Deletion of all vertices in the exterior of $C$ gives a quadrangulation $G'$ of the plane. 
As $G$ is connected, one of the vertices in $C$ must have a neighbour in the interior of $C$ and thus must have  degree at least $3$. Euler's formula  now implies that $
\sum_{v \in V(G')} \deg(v)=2|E(G')| \leq 4|V(G')| -8.
$ As no vertex in $G'$ has degree $0$ or $1$, there must be a vertex of degree $2$ or $3$ in $V(G')-V(C)$. This vertex has the same degree in $G$ and is contained in the interior of $C$.

We use deletions of degree-$2$ vertices and $t$-contractions at degree-$3$ vertices in the interior of the smallest cycle of $\mathcal{C} $ to successively get rid of all vertices in the interior of $4$-cycles. By Observation~\ref{obs:deg2_vertex} and Lemma~\ref{lem:quad_stays_quad}, the obtained graphs are  quadrangulations.
Now, suppose that no more $t$-contraction at a degree-$3$ vertex and no more deletion of a degree-$2$ vertex is possible. Assume that the obtained graph is not a $4$-cycle. Then, there is a cycle $C'\in \mathcal{C}$  whose interior does not contain another cycle of $\mathcal{C}$. 
As we have seen above, $C' \in \mathcal{C}$  contains a vertex $v$ of degree $3$. Since no $t$-contraction can be applied to $v$, the vertex $v$ has two adjacent neighbours.
This contradicts~\eqref{eq:quad_plane->bipartite}. 
\end{proof}

In the rest of the paper, we will consider the projective plane.

A quadrangulation of the projective plane is~\emph{nice} if no vertex is contained in the interior of a contractible $4$-cycle.
 
\begin{lemma}\label{lem:make_quad_nice}
Let $G$ be a non-bipartite quadrangulation of the projective plane. Then, there is a sequence of $t$-contractions and deletions of vertices of degree $2$ that transforms $G$ into a nice quadrangulation. During the whole process, the graph remains a quadrangulation.
\end{lemma}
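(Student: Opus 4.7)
The plan is to use induction on $|V(G)|$. If $G$ is already nice, the empty sequence suffices. Otherwise, let $\mathcal{C}$ denote the set of contractible $4$-cycles of $G$ with at least one vertex in their interior, and pick $C\in\mathcal{C}$ minimising the number of vertices in its interior. The subgraph $H$ induced by $V(C)$ together with the interior vertices of $C$, under the restricted embedding, is a plane quadrangulation of the closed disk $\overline{D_C}$ bounded by $C$. Repeating the Euler-formula computation from the proof of Lemma~\ref{lem:plane_C4_irreducible} inside $H$ yields a vertex $v$ in the interior of $C$ with $\deg_H(v)\in\{2,3\}$; since $v$ lies in the open disk $D_C$, we have $\deg_G(v)=\deg_H(v)$ as well.

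If $\deg_G(v)=2$, delete $v$; Observation~\ref{obs:deg2_vertex} applies since $G\neq C_4$. If $\deg_G(v)=3$, apply a $t$-contraction at $v$, justified by Lemma~\ref{lem:quad_stays_quad}. In either case the graph shrinks and remains a (non-bipartite) quadrangulation of the projective plane, so induction finishes. Non-bipartiteness is preserved because the operation occurs inside the contractible disk $\overline{D_C}$: any non-contractible odd cycle of $G$ can be chosen to avoid this disk, since every cycle lying inside $\overline{D_C}$ is contractible and hence of even length (Observation~\ref{obs:cycles_in_pp}).

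The main technical work is verifying the hypotheses of Lemma~\ref{lem:quad_stays_quad} in the degree-$3$ case. Stability of $N(v)$ is easy: an edge between two neighbours of $v$ would produce a triangle through $v$ inside $\overline{D_C}$, i.e.\ a contractible odd cycle, contradicting Observation~\ref{obs:cycles_in_pp}. The delicate condition is that $v$ lies on no contractible $4$-cycle with a vertex in its interior. If some $C^*$ were such a $4$-cycle, I would argue --- using that $v\in D_C$, that all neighbours of $v$ lie in $\overline{D_C}$, and that the complement of $\overline{D_C}$ in the projective plane is a M\"obius band --- that $C^*\subseteq\overline{D_C}$; consequently its interior disk $D^*$ is strictly contained in $D_C$ (it misses $v$ but still contains a vertex), giving an element of $\mathcal{C}$ with strictly fewer interior vertices than $C$ and contradicting the choice of $C$. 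The main obstacle is precisely this topological step, especially the subcase in which $C^*$ shares vertices with the boundary cycle $C$.
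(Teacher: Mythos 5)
Your overall strategy --- choose $C\in\mathcal{C}$ with the fewest interior vertices, extract a degree-$2$ or degree-$3$ vertex $v$ from its interior via the Euler computation, perform one operation and recurse --- is essentially the paper's proof reorganised: the paper simply restricts $G$ to the closed disk bounded by $C$, obtains a plane quadrangulation, and invokes Lemma~\ref{lem:plane_C4_irreducible} to clear out the whole interior at once. The substantive difference is that you try to verify the hypothesis of Lemma~\ref{lem:quad_stays_quad} directly in $G$, and that is exactly where your proof is incomplete, as you yourself flag. The purely topological route you sketch does not work in the subcase you name: writing a contractible $4$-cycle through $v$ as $C^*=(v,a,w,b)$ with $a,b\in N(v)$, the containment $C^*\subseteq\overline{D_C}$ is automatic only when $a$ or $b$ lies in the open disk $D_C$ \emph{and} no edge of $C^*$ joins two vertices of $C$. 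If both $a$ and $b$ lie on $C$, the edges $aw,bw$ may exit through the boundary and $w$ may sit in the exterior M\"obius band; then the interior of $C^*$ need not be a subset of $D_C$, and minimality of $C$ yields no contradiction. The same loophole affects your ``easy'' stability check: an edge between two neighbours of $v$ that both lie on $C$ could be drawn in the exterior, so the resulting triangle is not a priori contractible.

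The missing ingredient is combinatorial, not topological: \emph{at most one neighbour of $v$ lies on $V(C)$}. Indeed, if $a,b\in N(v)\cap V(C)$ are consecutive on $C$, then $v,a,b$ together with the edge $ab$ of $C$ form a triangle drawn inside $\overline{D_C}$, hence a contractible odd cycle, contradicting Observation~\ref{obs:cycles_in_pp}. If they are opposite on $C=(a,c,b,d)$, the path $a\mbox{-}v\mbox{-}b$ splits $D_C$ into the interiors of the contractible $4$-cycles $(a,v,b,c)$ and $(a,v,b,d)$, each of which misses $v$ and hence has strictly fewer interior vertices than $C$; by your choice of $C$ both interiors are empty, so $v$ is the unique interior vertex of $C$, its third neighbour lies on $C$ and is consecutive with $a$ or $b$, and we again obtain a contractible triangle. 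With this in hand, at least one of $a,b$ lies in $D_C$, which forces $w\in\overline{D_C}$ and lets your minimality argument run --- except for one residual configuration you should still address: if the remaining vertices $b,w$ of $C^*$ are opposite vertices of $C$ joined by a chord drawn in the exterior, then $C^*$ again leaves the disk and its interior can contain vertices outside $D_C$. This last case needs its own argument (or a different choice of $C$); note that the paper's own proof does not make this verification explicit either, hiding it in the appeal to Lemma~\ref{lem:plane_C4_irreducible}. As submitted, your proof is a correct skeleton with an acknowledged and genuine hole at its load-bearing step.
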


\begin{proof}
Let $C$ be a contractible $4$-cycle whose interior contains at least one vertex. Delete all vertices that are contained in the exterior of $C$. The obtained graph is a quadrangulation of the plane. By Lemma~\ref{lem:plane_C4_irreducible}, there is a sequence of $t$-contractions (as described in Lemma~\ref{lem:quad_stays_quad}) and deletions of degree-$2$ vertices that eliminates all vertices in the interior of $C$. 
With this method, it is possible to transform $G$ into a nice quadrangulation. 

%

\end{proof}

Similar as in the proof of Theorem~\ref{thm:plane_C4_irreducible}, Euler's formula implies that a non-bipartite quadrangulation of the projective plane contains a vertex of degree $2$ or $3$.  As no nice quadrangulation has a degree-$2$ vertex (see Observation~\ref{obs:deg2_vertex}), we deduce: 

\begin{observation}\label{obs:nice_quad_min_degree}
Every nice non-bipartite quadrangulation of the projective plane has minimal degree $3$.
\end{observation}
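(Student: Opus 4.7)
The plan is to establish the two halves of ``minimum degree $3$'' separately: first that every vertex has degree at least $3$, and then that some vertex actually attains degree $3$.

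For the lower bound, I would argue that a nice non-bipartite quadrangulation $G$ of the projective plane admits no vertex of degree $\le 2$. Degree $0$ and $1$ are impossible in any quadrangulation of a simple graph: by Observation~\ref{obs:closed-cell-embedding}, $G$ is a $2$-cell embedding, so $G$ is connected and every vertex lies on the boundary of some face; but a face boundary of length $4$ in a simple graph cannot pass through an isolated or pendant vertex. The key point is ruling out degree $2$: note first that $G\neq C_4$, since $C_4$ is bipartite whereas $G$ is not. Now the proof of Observation~\ref{obs:deg2_vertex} shows that any degree-$2$ vertex $v$ in a quadrangulation distinct from $C_4$ is enclosed by a contractible $4$-cycle $(u,s,u',t,u)$, with $v$ in its interior. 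This is exactly the configuration forbidden by niceness, so no such $v$ exists.

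For the upper bound, I would invoke Euler's formula, following the hint just before the observation. Writing $n$, $m$, $f$ for the numbers of vertices, edges, and faces, Euler's formula on the projective plane (applicable by Observation~\ref{obs:closed-cell-embedding}) gives $n - m + f = 1$. Double-counting edge–face incidences, and using that every face is bounded by a $4$-cycle, yields $2m = 4f$, hence $f = m/2$. Substituting gives $m = 2n - 2$, and therefore
\begin{equation*}
\sum_{v\in V(G)}\deg(v) \;=\; 2m \;=\; 4n - 4 \;<\; 4n.
\end{equation*}
So the average degree is strictly less than $4$, and some vertex has degree at most $3$. By the first part, that vertex has degree exactly $3$.

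Neither half poses a serious obstacle; the only subtlety is the observation that niceness genuinely rules out degree-$2$ vertices, which is immediate once one recalls that the two $4$-faces incident with a degree-$2$ vertex automatically form a contractible $4$-cycle enclosing that vertex — precisely what niceness forbids. The Euler-formula counting is the standard fact that a quadrangulation of the projective plane has $m = 2n-2$ and hence average degree less than~$4$.
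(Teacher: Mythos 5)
Your proof is correct and follows the same route the paper takes: Euler's formula on the projective plane forces a vertex of degree at most $3$, and niceness excludes degree $2$ via the contractible $4$-cycle from the proof of Observation~\ref{obs:deg2_vertex}. You merely spell out the details (the explicit count $m=2n-2$ and the exclusion of degrees $0$ and $1$) that the paper leaves implicit.
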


 In an even embedding of an odd wheel $W$, every odd cycle must be non-contractible (see Observation~\ref{obs:cycles_in_pp}). Thus, it is easy to see that there is only one way (up to topological isomorphy) to embed an odd wheel in the projective plane. (This can easily be deduced from~\cite{MoRoVi96} --- a paper dealing with embeddings of planar graphs in the projective plane.) The embedding is illustrated in Figure~\ref{fig:pp_wheels}. Noting that this embedding is a quadrangulation, we observe:
 
\begin{observation} \label{obs:odd_wheel_nice_quad}
 Let $G$ be a quadrangulation of the projective plane that contains an odd wheel $W$. If $G$ is nice, then $G$ equals $W$. 
\end{observation}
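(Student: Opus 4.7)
The plan is to combine the uniqueness of the embedding of $W$ with the niceness of $G$ and a counting argument via Euler's formula. First I would observe that, since $W=W_{2k+1}$ admits (up to topological isomorphy) only one embedding in the projective plane and this embedding is itself a quadrangulation (as noted in the paragraph preceding the observation), the embedding of $W$ as a subgraph of $G$ agrees with this canonical one. In particular, the $2k+1$ faces of $W$ in this embedding are all bounded by $4$-cycles, and each such bounding $4$-cycle is contractible: $W$'s embedding is a quadrangulation, hence a $2$-cell embedding by Observation~\ref{obs:closed-cell-embedding}, so each of its face boundaries bounds a disk in the projective plane.

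Next I would argue that $V(G)=V(W)$. Any vertex of $G$ not lying on $W$ must be contained in the interior of some face of $W$'s embedding, and therefore lies in the interior of a contractible $4$-cycle of $G$. Since $G$ is nice, no such vertex exists, so the vertex sets coincide.

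Finally I would close with an edge-count. Euler's formula $V-E+F=1$ for the projective plane, together with the quadrangulation identity $2E=4F$, yields $E=2(V-1)$ for any quadrangulation; this gives $|E(G)|=2(|V(G)|-1)$ and, applied to the unique quadrangulation embedding of $W$, also $|E(W)|=2(|V(W)|-1)$. Using $|V(G)|=|V(W)|$ we get $|E(G)|=|E(W)|$, and because $W\subseteq G$ this forces $G=W$.

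The one potentially delicate point — really the only obstacle — is the first step: asserting that the subgraph-embedding of $W$ inside $G$ is the canonical quadrangulation embedding of $W$, so that its face boundaries are contractible $4$-cycles in $G$. This is handled by invoking the uniqueness statement recalled in the excerpt, so the rest of the argument reduces to a routine application of niceness and Euler's formula.
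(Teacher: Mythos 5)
Your argument is correct and follows essentially the same route the paper intends: the unique even embedding of the odd wheel makes every face of $W$ a contractible $4$-cycle, niceness then forces $V(G)=V(W)$, and the Euler count $|E|=2(|V|-1)$ rules out extra edges. The only point worth tightening is your ``delicate'' first step: the induced embedding of $W$ is pinned down not because $W$ has a unique embedding per se (it also embeds in a disk), but because its triangles are odd cycles in the non-bipartite even embedding $G$ and hence non-contractible by Observation~\ref{obs:cycles_in_pp} --- which is exactly the paper's own justification for the uniqueness claim.
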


Note that every graph containing an odd wheel also contains an induced odd wheel. Now, we consider even wheels.

\begin{lemma} \label{lem:even_wheel_no_embedding}
Even wheels $W_{2k}$ for $k\geq 2$ do not have an even embedding in the projective plane.
\end{lemma}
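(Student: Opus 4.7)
The plan is to show via a global counting argument that any even embedding of $W_{2k}$ in the projective plane must be a $2$-cell quadrangulation, and then to derive a contradiction from a parity obstruction in the local rotation around the hub.

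Denote the hub of $W_{2k}$ by $v$ and the rim vertices by $w_1,\dots,w_{2k}$ (indices modulo $2k$), and assume for contradiction that an even embedding exists. Since $|V|=2k+1$ and $|E|=4k$, the Euler characteristic formula for the projective plane gives $|V|-|E|+|F|\geq 1$, hence $|F|\geq 2k$, with equality precisely when the embedding is $2$-cell. On the other hand, every facial walk has length at least $4$ (the graph is simple and the embedding even), while $\sum_f |f|=2|E|=8k$; this forces $|F|\leq 2k$. Combining both bounds, $|F|=2k$, every face is a $4$-cycle, and the embedding is a $2$-cell quadrangulation.

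I would next classify the $4$-cycles of $W_{2k}$. Any $4$-cycle avoiding $v$ must live inside the rim cycle, which can itself be a $4$-cycle only when $k=2$. Every $4$-cycle through $v$ must be of the form $v,w_{i-1},w_i,w_{i+1},v$, using exactly the two spokes $vw_{i-1}$ and $vw_{i+1}$, whose indices differ by $2$. In the exceptional case $k=2$ I would rule out the rim as a face by counting: if it were a face, the four spokes would need all $8$ of their face incidences to occur among the remaining three faces, each of which is a spoke-$4$-cycle containing only two spokes; this permits at most $6$ incidences, a contradiction. Hence in every case each face is a spoke-$4$-cycle using two spokes whose indices differ by $\pm 2 \pmod{2k}$.

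The heart of the proof --- and the step I expect to require the most care in writing up --- is the rotation argument at $v$. In the $2$-cell embedding the $2k$ spokes incident to $v$ appear in some cyclic rotation, and each two consecutive spokes bound a face-corner at $v$. By the classification above, every two consecutive spokes in this rotation have indices differing by $\pm 2 \pmod{2k}$, so the parity of the spoke index is invariant under a step in the rotation. Consequently the rotation visits at most $k$ of the $2k$ spokes, contradicting the fact that it must cycle through all of them. This parity obstruction yields the desired contradiction.
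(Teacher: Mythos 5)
Your proof is correct, but it takes a genuinely different route from the paper's. The paper argues by reduction: for $k\geq 3$ it deletes the spokes $vw_5,\dots,vw_{2k}$ and replaces the resulting path of degree-$2$ rim vertices by a single edge (both operations preserve evenness of the embedding), thereby reducing to the base case $W_4$; that case is then settled topologically, using Observation~\ref{obs:cycles_in_pp} to force the unique even embedding of $W_4-w_3w_4$ (Figure~\ref{fig:pp_evenwheels}) and observing that reinserting $w_3w_4$ creates an odd face. You instead treat all $k\geq 2$ uniformly and purely combinatorially: Euler's inequality together with the face-length count forces any even embedding to be a $2$-cell quadrangulation with exactly $2k$ quadrilateral faces, the $4$-cycles of $W_{2k}$ are classified, and the rotation at the hub then yields a parity obstruction since consecutive spokes in the rotation must have indices differing by $\pm 2 \pmod{2k}$. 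What each approach buys: the paper's reduction is short once Observation~\ref{obs:cycles_in_pp} is available, but its base case leans on ``it is easy to see'' plus a figure; your argument needs neither the non-contractibility of odd cycles nor any picture, avoids the induction-like reduction, and is checkable entirely in the language of rotation systems. Two points deserve a sentence in a final write-up: (i) you invoke $\sum_f|f|=2|E|$ and $|f|\geq 4$ before cellularity is established --- this is justified because every edge lies on exactly two face-sides in any embedding and $W_{2k}$ has minimum degree $3$, so no facial walk backtracks and no boundary component has length $2$; (ii) in excluding the rim as a face when $k=2$, one should note that the rim cannot bound two faces of a connected graph other than $C_4$, though your bound of at most $6$ spoke incidences covers that degenerate case as well.
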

The statement follows directly from~\cite{MoRoVi96}.  We nevertheless give an elementary proof of the lemma. 

\begin{proof}
First assume that the $4$-wheel $W_4$ has an even embedding. As all triangles of $W_4-{w_3w_4}$ must be non-contractible by Observation~\ref{obs:cycles_in_pp}, it is easy to see that the graph must be embedded as in Figure~\ref{fig:pp_evenwheels}. Since the insertion of $w_3w_4$ will create an odd face, $W_4$ is not evenly embeddable.

  Now assume that $W_{2k}$ for $k \geq 3$ is evenly embedded. Delete the edges $vw_i$ for $i=5, \ldots, 2k$ and note that $w_5, \ldots, w_{2k}$ are now of degree $2$, ie the path $P=(w_4, w_5, \ldots, w_{2k}, w_1)$ bounds two faces or one face from two sides.
Deletion of the edges $vw_i$ preserve the even embedding:
Deletion of an edge bounding two faces $F_1, F_2$  merges the faces into a new face of size $|F_1|+|F_2|-2$. Deletion of an edge bounding a face $F$ from two sides leads to a new face of size $|F|-2$. In both cases, all other faces are left untouched.

Next, replace the odd path $P$ by the edge $w_4w_1$. The two faces $F_3, F_4$ adjacent to $P$ are transferred into two new faces of size $|F_3|- (2k-3)+1$ and $|F_4|- (2k-3)+1$. This yields an even embedding of $W_4$ which is a contradiction.
\end{proof}

\begin{figure}[bht]
\begin{center}
\begin{tikzpicture}[scale = .55]
\begin{scope}
\draw[dotted] (0,0) circle (2cm);
\def\krad{1cm}
\def\brad{2cm}
\def\angle{360/4}
\def\sangle{360/8}
\node[hvertex] (c) at (0,0){};
\node[novertex] (uc) at (0,-0.6){$v$};
\foreach \i in {1,...,4}{
  \node[hvertex] (v\i) at (270-\angle/2+\i*\angle:\krad){};
  \draw[hedge] (c) -- (v\i);
  }
\foreach \i in {0,2,3,4,6,7}{
  \node[novertex] (w\i) at (270-\angle/2+\i*\sangle:\brad){};
}
\draw[hedge] (v4) -- (w0);
\draw[hedge] (v2) -- (w4);
\draw[hedge] (v1) -- (w2);
\draw[hedge] (v3) -- (w6);
\draw[hedge] (v2) -- (w3);
\draw[hedge] (v3) -- (w7);

\node[novertex] (u4) at (1.4,-0.7){$w_4$};
\node[novertex] (u3) at (-1.4,-0.7){$w_3$};
\node[novertex] (u2) at (0.5,1.2){$w_2$};
\node[novertex] (u1) at (-0.5,1.2){$w_1$};
\end{scope}
\end{tikzpicture}
\end{center}

\caption{The only even embedding of $W_4 -  w_3w_4 $ in the projective plane. Opposite points on the dotted cycle are identified.}
\label{fig:pp_evenwheels}
\end{figure}
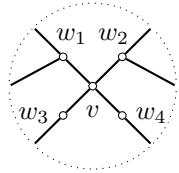

Note that  a $t$-contraction at a vertex $v$ is only allowed if its neighbourhood is stable, that is, if $v$ is not contained in a triangle. The next lemma characterises the quadrangulations to which no $t$-contraction can be applied.

\begin{lemma} \label{lem:irred_oddwheel}
Let $G$ be a non-bipartite nice quadrangulation of the projective plane where each vertex is contained in a triangle. 
Then $G$ is an odd wheel. 
\end{lemma}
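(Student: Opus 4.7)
The strategy is to fix any triangle $T=(a,b,c)$ in $G$ and exploit projective-plane topology to force a hub. Since $G$ is a non-bipartite even embedding, Observation~\ref{obs:cycles_in_pp} says every triangle is non-contractible. Two non-contractible simple closed curves in the projective plane must meet, and for embedded graph cycles this meeting must occur at a shared vertex. Hence every triangle of $G$ shares a vertex with $T$, and because every vertex lies in some triangle, the key structural fact follows: every vertex of $G$ is in $\{a,b,c\}$ or adjacent to one of $a,b,c$.

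Next I would pick a degree-$3$ vertex $v$, guaranteed by Observation~\ref{obs:nice_quad_min_degree} together with Euler's formula (which gives average degree less than $4$), and split the analysis by the number of triangles through $v$. If $v$ lies in three triangles, then $\{v\}\cup N(v)$ induces $K_4=W_3$, so Observation~\ref{obs:odd_wheel_nice_quad} gives $G=W_3$. If $v$ lies in only one triangle $(v,u_1,u_2)$, then the third neighbour $u_3$ has no available triangle: the absent edges $u_1u_3,u_2u_3$ block any triangle through $u_3$ meeting $T$ at $u_1$ or $u_2$, and a triangle $(u_3,v,y)$ would force $y\in\{u_1,u_2\}$ to be adjacent to $u_3$, a contradiction. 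So $v$ lies in exactly two triangles $(v,u_1,u_2)$ and $(v,u_2,u_3)$; set $h:=u_2$.

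The central step, and the main technical obstacle, is to prove that $h$ is adjacent to every other vertex. For any $z\neq h$ with $z\not\sim h$, one first observes $z\notin\{v,u_1,u_3\}$ (each of these is adjacent to $h$) and hence $z\not\sim v$, since $N(v)=\{u_1,u_2,u_3\}$. Applying the intersection principle to each of the triangles $T_1=(v,u_1,u_2)$ and $T_2=(v,u_2,u_3)$, any triangle through $z$ must meet both $T_1$ and $T_2$. A short enumeration of the possible pairs in $\{v,u_1,u_2\}\times\{v,u_2,u_3\}$ shows every such triangle would force one of the forbidden conditions $z\sim v$, $z\sim h$, or the non-existent edge $u_1u_3$. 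Thus no triangle through $z$ can exist, contradicting the hypothesis, so $z\sim h$.

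To finish, $\deg(h)=n-1$ together with the Euler relation $f=n-1$ forces every face to contain $h$, so the opposite-$h$ vertex of each such face must itself lie in $N(h)$. Hence the $n-1$ edges of $G$ not incident to $h$ live entirely inside $N(h)$; combined with minimum degree $3$ in $G$, a standard count shows $G-h$ is a $2$-regular graph on $n-1$ vertices. Consecutive neighbours of $h$ in its rotation are joined through the opposite vertex of the face between them, so $G-h$ is connected and hence equals $C_{n-1}$. This gives $G=W_{n-1}$, and since $G$ is evenly embedded in the projective plane, Lemma~\ref{lem:even_wheel_no_embedding} forces $n-1$ to be odd, so $G$ is an odd wheel.
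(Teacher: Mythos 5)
Your proof is correct and follows essentially the same route as the paper: non-contractibility forces all triangles to pairwise intersect, which yields a universal vertex $h$, and then Euler's formula plus the minimum-degree bound shows $G-h$ is a cycle, with Lemma~\ref{lem:even_wheel_no_embedding} forcing odd length. Your case analysis on the number of triangles through the degree-$3$ vertex is organised a little differently from the paper's (which shows one fixed neighbour lies in \emph{every} triangle), and your rotation argument for the connectedness of $G-h$ is a detail the paper leaves implicit, but the substance is the same.
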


\begin{proof} 
By Observation~\ref{obs:nice_quad_min_degree}, there is a vertex $v$ of degree $3$ in $G$. Let  $\{x_1,x_2,x_3\}$ be its neighbourhood and let $x_1$,$x_2$ and $v$ form a triangle. 

Recall that each two triangles are non-contractible (see~Observation~\ref{obs:cycles_in_pp}). Consequently each two triangles intersect.  
As $x_3$ is contained in a triangle intersecting the triangle $(v,x_1,x_2)$ and as $v$ has no further neighbour, we can suppose without loss of generality that $x_3$ is adjacent to $x_1$. 
The graph induced by the two triangles $(v,x_1,x_2)$ and $(x_1,v,x_3)$ is not a quadrangulation. Further, addition of the edge $x_2x_3$ yields a $K_4$. 
By Observation~\ref{obs:odd_wheel_nice_quad}, $G$ then equals the odd wheel $W_3=K_4$.

Otherwise, the graph contains a further vertex and this vertex is contained in a further triangle $T$.
Since the vertex $v$ has degree $3$, it is not contained in $T$. 
If further $x_1 \notin V(T)$, then the vertices $x_2$ and $x_3$ must be contained in $T$. But then $x_2x_3\in E(G)$ and, as above, $v$, $x_1$, $x_2$ and $x_3$ form a $K_4$. 
Therefore, $x_1$ is contained in $T$ and consequently in every triangle of $G$. Since every vertex is contained in a triangle, $x_1$ must be adjacent to all vertices of $ G-x_1$. 
As $|E(G)|=2|V(G)|-2$ by Euler's formula, the graph $G-x_1$ has $2|V(G)|-2-(|V(G)|-1)=|V(G)|-1=|V(G-x_1)|$ many edges. By Observation~\ref{obs:nice_quad_min_degree}, no vertex in $G$ has degree smaller than $3$. Consequently, no vertex in $G-x_1$ has degree smaller than $2$. Thus, $G-x_1$ is a cycle and $G$ is a wheel. 
By Lemma~\ref{lem:even_wheel_no_embedding}, $G$ is an odd wheel. 
\end{proof}

Finally, we can prove our second main result:
\begin{proof}[Proof of Theorem~\ref{thm:pp_odd_wheels_irreducible}]
Transform $G$ into a nice quadrangulation (Lemma~\ref{lem:make_quad_nice}). Now,
consecutively apply $t$-contractions (as described in Lemma~\ref{lem:quad_stays_quad}) as long as possible.
In each step, the obtained graph is a quadrangulation. By Lemma~\ref{lem:make_quad_nice} we can assume that the quadrangulation is nice. If no more $t$-contraction can be applied, then every vertex is contained in a triangle. By Lemma~\ref{lem:irred_oddwheel}, the obtained quadrangulation is an odd wheel.
\end{proof}

\section{(Strong) $t$-perfection}\label{sec:t-perf}

The \emph{stable set polytope} $\ssp(G)\subseteq\mathbb R^{V}$ of a graph $G=(V,E)$ 
is defined as the convex 
hull of the characteristic vectors of stable, ie independent, subsets of $V$.
The characteristic vector of a subset $S$ of the set $V$ is the vector $\charf{S}\in \{0,1\}^{V}$ with $\charf{S}(v)=1$ if $v\in S$ and $0$ otherwise. 
We define a second polytope $\tstab(G)\subseteq\mathbb R^V$ for $G$, given by
\begin{eqnarray} \label{inequ}
&&x\geq 0,\notag\\
&&x_u+x_v\leq 1\text{ for every edge }uv\in E,\\ 
&&\sum_{v\in V(C)}x_v\leq \left\lfloor\frac{ |C|}{2}\right\rfloor\text{ for every induced odd cycle }C
\text{ in }G.\notag
\end{eqnarray}
These inequalities are respectively known as non-negativity, edge and
odd-cycle inequalities. Clearly, $\ssp(G)\subseteq \tstab(G)$.
The graph $G$ is called \emph{$t$-perfect} if $\ssp(G)$ and
$\tstab(G)$ coincide. Equivalently, $G$ is $t$-perfect if and only if
$\tstab(G)$ is an integral polytope, ie if all its vertices
are integral vectors. 
The graph $G$ is called \emph{strongly $t$-perfect} if the system \eqref{inequ} of inequalities is totally dual integral. That is, if for each weight vector $w \in \Z^{V}$, the linear program of maximizing $w^Tx$ over~\eqref{inequ} has an integer optimum dual solution. 
This property implies that $\tstab(G)$ is integral. Therefore, 
strong $t$-perfection implies $t$-perfection.
It is an open question whether every $t$-perfect graph is strongly $t$-perfect. 
The question is briefly discussed in Schrijver~\cite[Vol. B, Ch. 68]{LexBible}.

It is easy to see that all bipartite graphs are (strongly) $t$-perfect (see eg Schrijver~\cite[Ch.~68]{LexBible}) and that vertex deletion preserves (strong) $t$-perfection.
Another operation that keeps (strong) $t$-perfection  (see eg~\cite[Vol.~B,~Ch.~68.4]{LexBible}) was found by Gerards and Shepherd~\cite{GS98}: the $t$-contraction. 


Odd wheels $W_{2k+1}$ for $k \geq 1$ are not (strongly) $t$-perfect. Indeed, the vector $(1 \slash 3, \ldots, 1 \slash 3)$  is contained in $\tstab(W_{2k+1})$ but not in $\ssp(W_{2k+1})$. 

With this knowledge, the proof of Theorem~\ref{thm:t-perfect} follows directly from Theorem~\ref{thm:pp_odd_wheels_irreducible}.

\begin{proof}  [Proof of Theorem~\ref{thm:t-perfect}]
If $G$ is bipartite, the $G$ is (strongly) $t$-perfect. 

Let $G$ be non-bipartite. Then, there is a sequence of $t$-contractions and deletions of vertices that transforms  $G$ into an odd wheel (Theorem~\ref{thm:pp_odd_wheels_irreducible}). As odd wheels are not (strongly) $t$-perfect and as vertex deletion and $t$-contraction preserve (strong) $t$-perfection, $G$ is not (strongly) $t$-perfect.
\end{proof}

\bibliographystyle{abbrv}
\bibliography{bibquad}

\vfill
\noindent
Elke Fuchs
{\tt <elke.fuchs@uni-ulm.de>}\\
Laura Gellert
{\tt <laura.gellert@uni-ulm.de>}\\
Institut f\"ur Optimierung und Operations Research\\
Universit\"at Ulm, Ulm\\
Germany\\

\end{document}